\documentclass[11pt]{article} 
\usepackage{amsfonts,amsmath,latexsym,amssymb,mathrsfs,amsthm,comment}
\usepackage[colorlinks=true, linkcolor=black, citecolor=black]{hyperref}
\usepackage{graphicx}
\usepackage{xcolor}
\usepackage{booktabs}

\evensidemargin0cm
\oddsidemargin0cm
\textwidth16cm
\textheight22.8cm
\topmargin-1.7cm  



\let\OLDthebibliography\thebibliography
\renewcommand\thebibliography[1]{
  \OLDthebibliography{#1}
  \setlength{\parskip}{1pt}
  \setlength{\itemsep}{0pt plus 0.0ex}
}


%

\def\numberlikeadb{\global\def\theequation{\thesection.\arabic{equation}}}
\numberlikeadb
\newtheorem{theorem}{Theorem}[section]
\newtheorem{lemma}[theorem]{Lemma}
\newtheorem{corollary}[theorem]{Corollary}

\newtheorem{proposition}[theorem]{Proposition}
\newtheorem{remark}[theorem]{Remark}

\usepackage{color}

\usepackage{lscape}
\usepackage{caption}
\usepackage{multirow}
\allowdisplaybreaks
\begin{document}

\title{The distribution of the ratio of products of independent zero mean normal random variables}
	\author{Robert E. Gaunt\footnote{Department of Mathematics, The University of Manchester, Oxford Road, Manchester M13 9PL, UK, robert.gaunt@manchester.ac.uk; heather.sutcliffe@manchester.ac.uk}\:\, and Heather L. Sutcliff$\mathrm{e}^{*}$}

	\date{} 
	\maketitle
	
	\vspace{-7mm}

\begin{abstract}    
	Let $X_1,\ldots,X_M$ and  $Y_1,\ldots,Y_N$ be independent zero mean normal random variables with variances $\sigma_{X_i}^2$, $i=1,\ldots,M$, and $\sigma_{Y_j}^2$, $j=1,\ldots,N$, respectively, and let $X=X_1\cdots X_M$ and $Y=Y_1\cdots Y_N$. In this paper, we derive the exact probability density function of the ratio $X/Y$. We  apply this formula to derive exact formulas for the cumulative distribution function and the characteristic function. We also obtain further distributional properties, including asymptotic approximations for the probability density function, tail probabilities and the quantile function. 
\end{abstract} 

\noindent{{\bf{Keywords:}}} Normal distribution; ratios of random variables;
products of random variables; Meijer $G$-function

\noindent{{{\bf{AMS 2020 Subject Classification:}}} Primary 60E05; 62E15; Secondary 33C60; 41A60}

\section{Introduction}

Ratios and products of random variables arise in applications throughout probability and statistics, as well as the biological and physical sciences; for some examples of application areas see \cite{nd06}. As such, there is now an extensive literature concerning the distributional theory of ratios and products of random variables, with exact formulas derived for many classical probability distributions; see \cite{nad07} for an overview of some of this literature. 

In this paper, we study the exact distribution of the ratio of products of independent zero mean normal random variables, and in doing so fill a clear gap in the literature. More specifically, let $X_1,\ldots,X_M$ and  $Y_1,\ldots,Y_N$ be independent normal random variables with zero mean and variances $\sigma_{X_i}^2$, $i=1,\ldots,M$, and $\sigma_{Y_j}^2$, $j=1,\ldots,N$, respectively. Also, let $\sigma_{M}=\sigma_{X_1}\cdots\sigma_{X_M}$ and $s_N=\sigma_{Y_1}\cdots\sigma_{Y_N}$. We use the convention that the empty product is set to one. We thus set $\sigma_0=1$ and $s_0=1$, and set $X=1$ when $M=0$, and $Y=1$ when $N=0$.  In this paper, we study the distribution of the ratio $Z=X/Y$, deriving a number of its key distributional properties, including the probability density function (PDF), cumulative distribution function (CDF), characteristic function, fractional moments, and also derive asymptotic approximations for the PDF, tail probabilities and the quantile function. These results are stated and proved in Section \ref{sec2}.



The general formulas for the PDF, CDF and characteristic function of Section \ref{sec2} are stated in terms of the Meijer $G$-function (defined in Appendix \ref{appa}). In Section \ref{sec3}, we present some special cases in which these formulas take a simpler form.
We now briefly review some special cases that have already been treated in the literature. A closed-form formula for the PDF of the product $X=X_1\cdots X_M$, expressed in terms of the Meijer $G$-function, was obtained by \cite{product normal}: 
\begin{align}
	f_X(x)=\frac{1}{(2\pi)^{M/2}\sigma_M}G^{M,0}_{0,M}\bigg(\frac{x^2}{2^M\sigma^2_M} \; \bigg| \;{- \atop 0,\ldots,0}\bigg),
    \quad x\in\mathbb{R}.
    \label{1.1}
\end{align}
In the case $M=2$, the PDF (\ref{1.1}) for the product $X=X_1X_2$ takes the simpler form $f_X(x)=(\pi\sigma_2)^{-1}K_0(|x|/\sigma_2)$, $x\in\mathbb{R}$, where $K_0(x)$ is a modified Bessel function of the second kind (see \cite{wb32}). However, further simplifications are unavailable for $M\geq3$. It is well-known that the ratio of independent zero-mean normal random variables follows the Cauchy distribution, and so in the case $M=N$, the ratio $Z=X/Y$ is distributed as the product of $M$ independent Cauchy random variables. The PDF of the product of two independent Cauchy random variables was derived by \cite{r65}, and more generally an expression for the PDF of the product of independent Cauchy random variables was obtained by \cite{st66}. Exact formulas for the PDF and CDF of $M$ independent Cauchy random variables also follow as special cases of the formulas of \cite{nad} for the product of $M$ independent Student's $t$ distributed random variables, which are expressed in terms of the Meijer $G$-function. 



\section{The normal product ratio distribution}\label{sec2}

\subsection{Probability density function}
In the following theorem, we provide a closed-form formula for the PDF $f_Z(z)$ of the ratio $Z=X/Y$ which is expressed in terms of the Meijer $G$-function.
\begin{theorem}Suppose $M,N\geq0$ and $M+N\geq1$. Let $X_1,\ldots,X_M$ and $Y_1,\ldots,Y_N$ be independent random variables where $X_i\sim N(0,\sigma^2_{X_i})$, $i=1,\ldots,M$, and $Y_j\sim N(0,\sigma^2_{Y_j})$, $j=1,\ldots,N$. Let $X=X_1\cdots X_M$ and $Y=Y_1\cdots Y_N$, and denote $Z=X/Y$. Then, for $z\in\mathbb{R}$,
	\begin{align}
		f_Z(z)=\frac{2^{(N-M)/2}s_N}{\pi^{(M+N)/2}\sigma_M}G^{M,N}_{N,M}\bigg(\frac{2^Ns_N^2}{2^M\sigma_M^2}z^2 \; \bigg| \;{0,\ldots,0 \atop 0,\ldots,0}\bigg).\label{2.1}
	\end{align}
\end{theorem}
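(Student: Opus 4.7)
My plan is to use the Mellin transform, since $Z$ is a ratio/product of independent random variables and the Meijer $G$-function is naturally characterized by its Mellin-Barnes integral representation.

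First I would exploit symmetry: each $X_i$ and $Y_j$ is symmetric about $0$, so $Z$ is symmetric, and it suffices to compute $f_{|Z|}$ on $(0,\infty)$ and then use $f_Z(z)=\tfrac12 f_{|Z|}(|z|)$. Writing $|Z|=\prod_{i=1}^M |X_i|/\prod_{j=1}^N|Y_j|$ and using independence, the Mellin transform factors:
\begin{equation*}
\mathbb{E}[|Z|^{s-1}]=\prod_{i=1}^M \mathbb{E}[|X_i|^{s-1}]\cdot\prod_{j=1}^N\mathbb{E}[|Y_j|^{1-s}].
\end{equation*}
For $W\sim N(0,\sigma^2)$ the absolute moments are standard: $\mathbb{E}[|W|^{r}]=\sigma^{r}2^{r/2}\Gamma((r+1)/2)/\sqrt{\pi}$. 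Substituting and simplifying should give
\begin{equation*}
\mathbb{E}[|Z|^{s-1}]=\pi^{-(M+N)/2}A^{s-1}\Gamma(s/2)^M\,\Gamma(1-s/2)^N,\qquad A=\frac{2^{(M-N)/2}\sigma_M}{s_N},
\end{equation*}
which is valid on a vertical strip in $s$ that makes all the gammas convergent.

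Next I would apply Mellin inversion to write
\begin{equation*}
f_{|Z|}(z)=\frac{1}{2\pi i}\int_{c-i\infty}^{c+i\infty} z^{-s}\,\mathbb{E}[|Z|^{s-1}]\,ds,
\end{equation*}
substitute $s=2t$ to produce $\Gamma(t)^M\Gamma(1-t)^N$, and then send $t\mapsto -u$ so that the integrand takes the form $\Gamma(-u)^M\Gamma(1+u)^N w^u$ with $w=(z/A)^2=2^{N-M}s_N^2z^2/\sigma_M^2$. This matches the Mellin-Barnes definition of $G^{M,N}_{N,M}$ with $p=N$, $q=M$, all parameters equal to $0$, after checking that the contour can be chosen to separate the pole sequences of $\Gamma(-u)^M$ and $\Gamma(1+u)^N$ in the manner required by the standard definition in Appendix \ref{appa}. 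Collecting the leading constant $2\pi^{-(M+N)/2}A^{-1}$ (the factor $2$ coming from $ds=2\,dt$) gives $f_{|Z|}$, and halving yields \eqref{2.1}.

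The main obstacle will be the careful bookkeeping of constants and powers of $2$ in the substitutions $s=2t$, $t=-u$, together with verifying the convergence strip and pole-separation condition needed to identify the Mellin-Barnes integral with the Meijer $G$-function as defined in the appendix. Once these are in order, the identification is mechanical. A secondary (cosmetic) issue is to point out that, because the argument of the $G$-function depends on $z$ only through $z^2$, the formula obtained for $z>0$ extends automatically to all $z\in\mathbb{R}$, and also that when $M=0$ or $N=0$ the empty-product conventions make the Mellin factorization and the resulting $G$-function reduce consistently to the one-sided case (recovering \eqref{1.1} for $N=0$, up to the change of variables $z\mapsto z^2$).
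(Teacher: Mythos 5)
Your argument is correct, and it takes a genuinely different route from the paper. The paper computes $f_Z(z)=\int_{-\infty}^{\infty}|x|f_X(zx)f_Y(x)\,\mathrm{d}x$ directly, substitutes the known product-of-normals density \eqref{1.1} for both factors, and evaluates the resulting integral of a product of two Meijer $G$-functions via a tabulated formula (Luke 5.6.2(1)), with the boundary cases $N=0$ and $M=0$ then checked separately. You instead factor the Mellin transform $\mathbb{E}[|Z|^{s-1}]=\pi^{-(M+N)/2}A^{s-1}\Gamma(s/2)^M\Gamma(1-s/2)^N$ using the absolute moments of a centred normal and invert; after $s=2t$, $t=-u$ the inversion integral is literally the defining contour integral \eqref{mdef} of $G^{M,N}_{N,M}$ with all parameters zero, and your constants ($A=2^{(M-N)/2}\sigma_M/s_N$, the factor $2$ from $\mathrm{d}s=2\,\mathrm{d}t$, the final halving) check out against \eqref{2.1}. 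What your approach buys: it is self-contained (it does not presuppose \eqref{1.1} or Luke's product-integral formula, and in fact re-derives \eqref{1.1} as the case $N=0$), and it treats all $M,N\geq0$ with $M+N\geq1$ uniformly, with the pole-separation condition automatic since the natural strip $0<\mathrm{Re}(s)<2$ maps to $-1<\mathrm{Re}(u)<0$. What the paper's approach buys is brevity once the cited results are granted. The one point you should make explicit rather than just flag is the justification of Mellin inversion: by Stirling's formula the integrand decays like $\exp(-\tfrac{\pi}{4}(M+N)|\mathrm{Im}(s)|)$ on vertical lines, so for $M+N\geq1$ the inversion integral converges absolutely and uniqueness of the Mellin transform identifies it with the (continuous version of the) density of $|Z|$ away from the origin. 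With that sentence added, the proof is complete.
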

\begin{proof}
When $M\geq1$ and $N\geq1$ we have, for $z\in\mathbb{R}$,
	\begin{align}
		f_Z(z)&=\int_{-\infty}^{\infty}|x|f_X(zx)f_Y(x)\,\mathrm{d}x=2\int_{0}^{\infty}xf_X(zx)f_Y(x)\,\mathrm{d}x\nonumber\\
		&=\frac{2}{(2\pi)^{(M+N)/2}\sigma_Ms_N}\int^{\infty}_{0}x\,G^{M,0}_{0,M}\bigg(\frac{z^2x^2}{2^M\sigma_M^2} \; \bigg| \;{- \atop 0,\ldots,0}\bigg)G^{N,0}_{0,N}\bigg(\frac{x^2}{2^Ns_N^2} \; \bigg| \;{-\atop 0,\ldots,0}\bigg)\,\mathrm{d}x\nonumber\\
		&=\frac{1}{(2\pi)^{(M+N)/2}\sigma_Ms_N}\int^{\infty}_{0}G^{M,0}_{0,M}\bigg(\frac{z^2u}{2^M\sigma_M^2} \; \bigg| \;{-\atop 0,\ldots,0}\bigg)G^{N,0}_{0,N}\bigg(\frac{u}{2^Ns_N^2} \; \bigg| \;{- \atop 0,\ldots,0}\bigg)\,\mathrm{d}u,\nonumber
	\end{align}
	where in the third step we used the PDF formula (\ref{1.1}).
    On using formula 5.6.2(1) of \cite{luke} to evaluate the integral we obtain (\ref{2.1}).
	When $N=0$ our PDF formula (\ref{2.1}) reduces to the PDF (\ref{1.1}) for $X=X_1\cdots X_M$, as given in \cite{product normal}, and so the formula is also valid for $M\geq1$ and $N=0$. Finally, we verify that formula (\ref{2.1}) is also valid for $M=0$ and $N\geq1$. In this case, by the standard formula for the PDF of a one-to-one transformation of a random variable, $f_Z(z)=f_{1/Y}(z)=z^{-2}f_Y(1/z)$, $z\in\mathbb{R}$, so that, by formula (\ref{1.1}),
    \begin{align*}
     f_Z(z)&=\frac{1}{(2\pi)^{N/2}s_N}\frac{1}{z^2}G^{N,0}_{0,N}\bigg(\frac{1}{2^Ns^2_Nz^2} \; \bigg| \;{- \atop 0,\ldots,0}\bigg) =\frac{2^{N/2}s_N}{\pi^{N/2}}G^{0,N}_{N,0}\bigg(2^Ns_N^2z^2 \; \bigg| \;{0,\ldots,0 \atop -}\bigg),
    \end{align*}
  where the second equality follows from applying the identities (\ref{meijergidentity}) and (\ref{mgiv}). This confirms that formula (\ref{2.1}) holds for $M=0$ and $N\geq1$.
\end{proof}

\subsection{Cumulative distribution function}
Let $F_Z(z)=\mathbb{P}(Z\leq z)$ denote the CDF of $Z=X/Y$. In the following theorem, we provide closed-form formulas for the CDF of $Z$.
\begin{theorem}\label{thmcdf}Let $M,N\geq0$ and $M+N\geq1$. Then, for $z\in\mathbb{R}$,
	\begin{align}F_Z(z)&=\frac{1}{2}+\frac{2^{(N-M-2)/2}s_N}{\pi^{(M+N)/2}\sigma_M}z\,G^{M,N+1}_{N+1,M+1}\bigg(\frac{2^Ns_N^2}{2^M\sigma_M^2}z^2 \; \bigg| \;{\frac{1}{2},0,\ldots,0 \atop 0,\ldots,0,-\frac{1}{2}}\bigg)\label{2.2a} \\
		&=\frac{1}{2}+\frac{\mathrm{sgn}(z)}{2\pi^{(N+M)/2}}G^{M,N+1}_{N+1,M+1}\bigg(\frac{2^Ns_N^2}{2^M\sigma_M^2}z^2 \; \bigg| \;{1,\frac{1}{2},\ldots,\frac{1}{2}\atop \frac12,\ldots,\frac12,0}\bigg)\label{2.2b},
	\end{align}
    where $\mathrm{sgn}(z)$ is the sign function, $\mathrm{sgn}(z)=-1$ for $z<0$, $\mathrm{sgn}(0)=0$, $\mathrm{sgn}(z)=1$ for $z>0$.
\end{theorem}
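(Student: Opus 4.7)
The plan is to integrate the PDF of Theorem 2.1 directly, using the symmetry of the distribution to reduce to $z>0$, and then apply two standard Meijer $G$-function manipulations. First I would observe that since all the $X_i$'s and $Y_j$'s are zero mean (and hence symmetric about $0$), the ratio $Z$ is symmetric about $0$, so $F_Z(0)=1/2$. For $z>0$, I will write
\begin{equation*}
F_Z(z)-\tfrac{1}{2}=\int_0^z f_Z(t)\,\mathrm{d}t=\frac{2^{(N-M)/2}s_N}{\pi^{(M+N)/2}\sigma_M}\int_0^z G^{M,N}_{N,M}\!\left(\tfrac{2^Ns_N^2}{2^M\sigma_M^2}t^2\;\bigg|\;{0,\ldots,0\atop 0,\ldots,0}\right)\mathrm{d}t.
\end{equation*}

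Next, I would make the substitution $u=t^2$ to convert this into a single-variable Meijer $G$-integral against a power:
\begin{equation*}
\int_0^z G^{M,N}_{N,M}(ct^2\mid\cdot)\,\mathrm{d}t=\frac{1}{2}\int_0^{z^2}u^{-1/2}G^{M,N}_{N,M}(cu\mid\cdot)\,\mathrm{d}u,\qquad c=\tfrac{2^Ns_N^2}{2^M\sigma_M^2}.
\end{equation*}
I would then apply the standard indefinite integration formula for the Meijer $G$-function against $u^{\alpha-1}$ (e.g.\ Luke 5.6.1(1)), which with $\alpha=1/2$ yields
\begin{equation*}
\int_0^{y}u^{-1/2}G^{M,N}_{N,M}\!\left(cu\;\bigg|\;{0,\ldots,0\atop 0,\ldots,0}\right)\mathrm{d}u=y^{1/2}G^{M,N+1}_{N+1,M+1}\!\left(cy\;\bigg|\;{\tfrac12,0,\ldots,0\atop 0,\ldots,0,-\tfrac12}\right).
\end{equation*}
Setting $y=z^2$ and collecting the constants gives formula (\ref{2.2a}) for $z>0$. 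Since the argument $cz^2$ is even in $z$, the expression $z\,G^{M,N+1}_{N+1,M+1}(cz^2\mid\cdots)$ is odd in $z$, so the required identity $F_Z(-z)=1-F_Z(z)$ (from symmetry) extends (\ref{2.2a}) to all $z\in\mathbb{R}$.

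Finally, to obtain (\ref{2.2b}) I would absorb the explicit factor of $z$ into the Meijer $G$-function using the shift identity $w^{\gamma}G^{m,n}_{p,q}(w\mid a_i;b_j)=G^{m,n}_{p,q}(w\mid a_i+\gamma;b_j+\gamma)$. Writing $z=\mathrm{sgn}(z)|z|$ and $|z|=c^{-1/2}(cz^2)^{1/2}$, the choice $\gamma=1/2$ shifts every upper parameter from $0,\tfrac12,0,\ldots,0$ to $1,\tfrac12,\ldots,\tfrac12$ and every lower parameter from $0,\ldots,0,-\tfrac12$ to $\tfrac12,\ldots,\tfrac12,0$. The leftover constant simplifies because $c^{-1/2}=2^{(M-N)/2}\sigma_M/s_N$ cancels precisely the $s_N/\sigma_M$ and the $2^{(N-M-2)/2}$ prefactors, leaving $1/(2\pi^{(M+N)/2})$ multiplied by $\mathrm{sgn}(z)$.

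The only delicate step is the application of the Meijer $G$-integration formula: one must check that the parameter configuration (all $a_i$'s and $b_j$'s equal to $0$) falls within the range where the formula is valid, in particular that the added pair $1-\alpha,-\alpha=\tfrac12,-\tfrac12$ does not produce a pole that cancels a zero and invalidates the identity. Otherwise, both parts of the theorem reduce to bookkeeping with the shift identity and the symmetry of $Z$.
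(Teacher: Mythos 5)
Your proposal is correct and follows essentially the same route as the paper's proof: integrate the symmetric PDF from $0$ to $z$, substitute $u\propto t^2$, apply the indefinite integration formula for the Meijer $G$-function (the paper's (\ref{mint})) to obtain (\ref{2.2a}), and then absorb the factor of $z$ via the shift identity (\ref{meijergidentity}) to get (\ref{2.2b}). The one detail the paper makes explicit that you only gesture at is that the antiderivative $u^{1/2}G^{M,N+1}_{N+1,M+1}(u\mid\cdots)$ vanishes at the lower limit $u=0$ (seen from the contour-integral representation (\ref{mdef})), which is what justifies retaining only the upper-limit term.
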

\begin{proof}As the PDF (\ref{2.1}) is symmetric about the origin, we have that, for $z>0$,
	\begin{align}
		F_Z(z)&=\frac{1}{2}+\frac{2^{(N-M)/2}s_N}{\pi^{(M+N)/2}\sigma_M}\int_{0}^{z}G^{M,N}_{N,M}\bigg(\frac{2^Ns_N^2}{2^M\sigma_M^2}x^2 \; \bigg| \;{0,\ldots,0 \atop 0,\ldots,0}\bigg)\,\mathrm{d}x\nonumber\\
		&=\frac{1}{2}+\frac{1}{2\pi^{(M+N)/2}}\int_{0}^{\frac{2^Ns_N^2}{2^M\sigma_M^2}z^2}u^{-1/2}G^{M,N}_{N,M}\bigg(u \; \bigg| \;{0,\ldots,0 \atop 0,\ldots,0}\bigg)\,\mathrm{d}u\nonumber\\
		&=\frac{1}{2}+\frac{1}{2\pi^{(M+N)/2}}\bigg[u^{1/2}G^{M,N+1}_{N+1,M+1}\bigg(u \; \bigg| \;{\frac12,0,\ldots,0 \atop 0,\ldots,0,-\frac12}\bigg)\bigg]^{\frac{2^Ns_N^2}{2^M\sigma_M^2}z^2}_{0}
    \nonumber    \\
		&=\frac{1}{2}+\frac{2^{(N-M-2)/2}s_N}{\pi^{(M+N)/2}\sigma_M}\,z\,G^{M,N+1}_{N+1,M+1}\bigg( \frac{2^Ns_N^2}{2^M\sigma_M^2}z^2\; \bigg| \;{\frac12,0,\ldots,0 \atop 0,\ldots,0,-\frac12}\bigg)\nonumber,
	\end{align}
	where the integral was evaluated using (\ref{mint}). We also used that the $G$-function in the third equality evaluated at $u=0$ is equal to zero, which can be seen from the contour integral representation (\ref{mdef}) of the $G$-function. This confirms that formula (\ref{2.2a}) holds for $z>0$; the verification for $z<0$ is similar and is hence omitted. Lastly, formula (\ref{2.2b}) follows from (\ref{2.2a}) by an application of the identity (\ref{meijergidentity}), in which the cases $z>0$ and $z<0$ are treated separately.
\end{proof}

\begin{remark} When specialised to the case $N=0$, the formulas (\ref{2.2a}) and (\ref{2.2b}) represent new formulas for the CDF of the product $X=X_1\cdots X_M$. An alternative formula for the CDF of $X$, also expressed in terms of the Meijer $G$-function, is given in \cite[Lemma 2.1]{s17}.
\end{remark}

\subsection{Characteristic function}
In the following theorem, we provide a closed-form formula for the characteristic function of $Z$, which we denote by $\varphi_Z(t)=\mathbb{E}[\mathrm{e}^{\mathrm{i}tZ}]$. 
\begin{theorem}
(i) Suppose $M\geq1$ and $N\geq0$. Then, for $t\in\mathbb{R}$, 
	\begin{align}
		\varphi_Z(t)
		=\frac{1}{\pi^{(N+M-1)/2}}G^{N+1,M-1}_{M-1,N+1}\bigg(\frac{2^M\sigma_M^2t^2}{2^{N+2}s_N^2} \; \bigg| \;{\frac{1}{2},\ldots,\frac{1}{2} \atop 0,\frac{1}{2},\ldots,\frac{1}{2}}\bigg).\label{2.3b}
	\end{align}
(ii) Suppose $M=0$ and $N\geq1$. Then, for $t\in\mathbb{R}$, 
\begin{align}\label{ratioo}
\varphi_Z(t)
		=\frac{1}{\pi^{(N-1)/2}}G^{N+1,0}_{0,N+2}\bigg(\frac{t^2}{2^{N+2}s_N^2} \; \bigg| \;{- \atop 0,\frac{1}{2},\ldots,\frac{1}{2}}\bigg).   
\end{align}
\end{theorem}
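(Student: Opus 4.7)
The approach will mirror the proof of Theorem~2.1: convert every ingredient to Meijer $G$-function form and invoke the integral formula 5.6.2(1) of \cite{luke} for a product of two $G$-functions. By symmetry of the PDF (\ref{2.1}) about the origin,
\begin{align*}
\varphi_Z(t)=2\int_0^\infty\cos(tz)\,f_Z(z)\,\mathrm{d}z,
\end{align*}
so $\varphi_Z$ is real and even in $t$; it therefore suffices to treat $t>0$ and extend by evenness. I would then use the standard representation $\cos(tz)=\sqrt{\pi}\,G^{1,0}_{0,2}\big(t^2z^2/4\,\big|\,{- \atop 0,1/2}\big)$, substitute (\ref{2.1}), and change variables to $u=z^2$, turning the defining integral into the canonical form
\begin{align*}
\int_0^\infty u^{-1/2}\,G^{M,N}_{N,M}(\eta u)\,G^{1,0}_{0,2}(\omega u)\,\mathrm{d}u,
\end{align*}
with $\eta=2^N s_N^2/(2^M\sigma_M^2)$ and $\omega=t^2/4$.

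Applying formula 5.6.2(1) of \cite{luke} collapses this integral to a single Meijer $G$-function in the variable $\omega/\eta=(2^M\sigma_M^2 t^2)/(2^{N+2}s_N^2)$. For part (i), the resulting upper parameter list consists of the $N$ zeros inherited from the PDF $G$-function together with a $\tfrac12$ contributed by the cosine, while the lower list combines the $M$ zeros with a $0$ from the cosine. Cancelling a pair of identical $0$'s via the identity (\ref{meijergidentity}) reduces the orders by one in each row and each column, producing exactly the $G^{N+1,M-1}_{M-1,N+1}$ with the parameter lists displayed in (\ref{2.3b}).

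For part (ii) I would run the same argument starting from the $M=0$ form $f_Z(z)=f_{1/Y}(z)$ derived at the end of the proof of Theorem~2.1, so the PDF contributes the $G^{0,N}_{N,0}$ instead of $G^{M,N}_{N,M}$ to the integrand. Because there are now no upper $0$'s in the PDF $G$-function available to cancel against the lower $0$ of the cosine, no reduction via (\ref{meijergidentity}) takes place, and Luke's formula directly outputs the $G^{N+1,0}_{0,N+2}$ of (\ref{ratioo}).

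The main obstacle is purely combinatorial: tracking the exact multiplicities of $0$'s and $\tfrac12$'s in the parameter lists through Luke's formula, and verifying that the argument rescales precisely to $(2^M\sigma_M^2 t^2)/(2^{N+2}s_N^2)$. No new conceptual ingredient is needed beyond those already used in the PDF and CDF derivations; the only subtle point is that the Meijer $G$-representation of the cosine and Luke's integral formula both require a positive argument, which is exactly why the preliminary symmetry reduction to $t>0$ is essential before extending by evenness.
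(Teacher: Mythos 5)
Your proposal is essentially the paper's own argument: the paper evaluates $\int_0^\infty \cos(tz)\,G^{M,N}_{N,M}(\cdot\, z^2)\,\mathrm{d}z$ via Luke's cosine-transform formula 5.6.3(18) (which is precisely the specialization of 5.6.2(1) to the Meijer $G$-representation of the cosine that you propose), then removes the matching pair of zero parameters and tidies up with (\ref{meijergidentity}) and (\ref{mgiv}), and part (ii) differs only in that the reduction step is unavailable when $M=0$, exactly as you say. One small correction: the cancellation of the pair of $0$'s is performed by the identity (\ref{lukeformula}), not (\ref{meijergidentity}), which is only the power-shift identity used to absorb the $|t|^{-1}$ prefactor.
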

\begin{proof}
(i) Suppose $M\geq1$ and $N\geq0$.	Since the PDF (\ref{2.1}) is symmetric about the origin we have that, for $t\in\mathbb{R}$,
	\begin{align}
		\varphi_Z(t)=\mathbb{E}[\cos(tZ)]&=
		\frac{2^{(N-M+2)/2}s_N}{\pi^{(N+M)/2}\sigma_M}\int_{0}^{\infty}\cos(tz)G^{M,N}_{N,M}\bigg(\frac{2^Ns_N^2}{2^M\sigma_M^2}z^2 \; \bigg| \;{0,\ldots,0 \atop 0,\ldots,0}\bigg)\,\mathrm{d}z
\nonumber\\
&=\frac{2^{(N-M+2)/2}s_N}{\pi^{(N+M-1)/2}\sigma_M|t|}G^{M,N+1}_{N+2,M}\bigg(\frac{2^{N+2}s_N^2}{2^M\sigma_M^2}\frac{1}{t^2} \; \bigg| \;{\frac12,0,\ldots,0 \atop 0,\ldots,0}\bigg)\nonumber\\
&=\frac{2^{(N-M+2)/2}s_N}{\pi^{(N+M-1)/2}\sigma_M|t|}G^{M-1,N+1}_{N+1,M-1}\bigg(\frac{2^{N+2}s_N^2}{2^M\sigma_M^2}\frac{1}{t^2} \; \bigg| \;{\frac{1}{2},0,\ldots,0\atop 0,\ldots,0}\bigg)\nonumber\\
&=\frac{1}{\pi^{(N+M-1)/2}}G^{M-1,N+1}_{N+1,M-1}\bigg(\frac{2^{N+2}s_N^2}{2^M\sigma_M^2}\frac{1}{t^2} \; \bigg| \;{1,\frac{1}{2},\ldots,\frac{1}{2} \atop \frac{1}{2},\ldots,\frac{1}{2}}\bigg)\label{2.3a},
\end{align}
where the integral was evaluated using formula 5.6.3(18) of \cite{luke}, the penultimate equality was obtained by using (\ref{lukeformula}) and the final equality was obtained using (\ref{meijergidentity}). Finally, we obtain (\ref{2.3b}) from (\ref{2.3a}) by using (\ref{mgiv}).

\vspace{2mm}

\noindent (ii) The proof is similar to part (i), with the only difference being that we do not use identity (\ref{lukeformula}) to reduce the $G$-function to one of lower order (we cannot do this here as $M=0$).
\end{proof}

\begin{remark}
As can been seen from the asymptotic behaviour of the PDF of $Z$ as $|z|\rightarrow\infty$ (as described in Theorem \ref{thmtail} below), the moment generating function $M_Z(t)=\mathbb{E}[\mathrm{e}^{tZ}]$ is not defined for $N\geq1$. The moment generating function is only defined in the cases $M=1$, $N=0$ (for all $t\in\mathbb{R}$) and $M=2$, $N=0$ (for $|t|<1/\sigma_2$); this can be seen from the limiting form (\ref{2.52}).    
\end{remark}    

\subsection{Asymptotic behaviour of the distribution}

In the following theorems, we present asymptotic approximations for the PDF, CDF and quantile function of the ratio $Z=X/Y$. We let $\overline{F}_Z(z)=\mathbb{P}(Z> z)$ denote the complementary CDF of $Z$. Since the distribution of $Z$ is symmetric about the origin, $F_Z(-z)=\overline{F}_Z(z)$, so we only provide an asymptotic approximation for $\overline{F}_Z(z)$ as $z\rightarrow\infty$. Also, for $0<p<1$, we denote the quantile function of $Z$ by $Q(p)=F_Z^{-1}(p)$. Again, due to the symmetry of the distribution, $Q(1-p)=-Q(p)$, and so we only provide asymptotic approximations for $Q(p)$ as $p\rightarrow0$. The proofs of the theorems are deferred until the end of this subsection.

\begin{theorem}\label{thm2.4}
(i) Suppose $M\geq1$, $N\geq0$. Then, as $z\rightarrow0$,
\begin{equation}
	f_Z(z)\sim\frac{2^{(M+N-2)/2}s_N}{\pi^{(M+N)/2}\sigma_M}\frac{(-\ln|z|)^{M-1}}{(M-1)!}.\label{plim1}
\end{equation}
(ii) Suppose $M=0$, $N\geq1$. Then, as $z\rightarrow0$,
\begin{equation}
	f_Z(z)\sim \frac{2^{(N-2)/2}}{\sqrt{N\pi}s_N^{1/N}}|z|^{-(N+1)/N}\exp\bigg(-\frac{N}{2(s_N|z|)^{2/N}}
    \bigg).\label{2.42}
\end{equation}
\end{theorem}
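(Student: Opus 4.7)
The plan is to reduce each asymptotic to a statement about the small-argument behaviour of the Meijer $G$-function that appears in the PDF formula (\ref{2.1}), and to handle the two regimes $M\ge 1$ and $M=0$ by different contour arguments, since the shape of the Mellin--Barnes integrand changes qualitatively between them.

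For part (i), since all $a_j$ and $b_j$ are zero and the denominator Gamma products in (\ref{mdef}) are empty (because $m=q=M$ and $n=p=N$), the integrand reduces to $\Gamma(-s)^M\Gamma(1+s)^N x^s$. As $x\to 0^+$ I would close the contour to the right and collect residues at the poles $s=0,1,2,\ldots$ of $\Gamma(-s)^M$; the leading contribution is the $M$-th order pole at $s=0$. Using $\Gamma(-s)=-\Gamma(1-s)/s$, so that $\Gamma(-s)^M=(-1)^M\Gamma(1-s)^M/s^M$, and inserting the additional minus sign from the clockwise closure, a standard residue computation gives
$$G^{M,N}_{N,M}\bigl(x\,|\,0,\ldots,0;0,\ldots,0\bigr) \sim \frac{(-1)^{M+1}}{(M-1)!}\,\frac{\mathrm{d}^{M-1}}{\mathrm{d}s^{M-1}}\bigl[\Gamma(1-s)^M\Gamma(1+s)^N x^s\bigr]\bigg|_{s=0}.$$
The leading power of $|\ln x|$ is produced only by differentiating $x^s$ all $M-1$ times, which with the other factors evaluating to $1$ at $s=0$ yields $G^{M,N}_{N,M}(x)\sim(-\ln x)^{M-1}/(M-1)!$ as $x\to 0^+$. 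Substituting $x=2^N s_N^2 z^2/(2^M\sigma_M^2)$, using $(-\ln x)^{M-1}\sim 2^{M-1}(-\ln|z|)^{M-1}$ as $z\to 0$, and multiplying by the prefactor in (\ref{2.1}), the powers of $2$ consolidate into $2^{(M+N-2)/2}$ and (\ref{plim1}) follows.

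For part (ii), the integrand of $G^{0,N}_{N,0}(x|0,\ldots,0;-)$ has no poles to the right of the contour, so the behaviour near $x=0$ is exponentially small rather than logarithmic. I would first invert the $G$-function via identities (\ref{meijergidentity}) and (\ref{mgiv}) to obtain $G^{0,N}_{N,0}(x|0,\ldots,0;-)=x^{-1}G^{N,0}_{0,N}(1/x|-;0,\ldots,0)$, and then invoke the classical large-argument asymptotic
$$G^{N,0}_{0,N}\bigl(y\,|\,-;0,\ldots,0\bigr) \sim \frac{(2\pi)^{(N-1)/2}}{\sqrt{N}}\,y^{(1-N)/(2N)}\exp\bigl(-N y^{1/N}\bigr)\quad\text{as }y\to\infty,$$
which comes from the general theory of $G$-functions with exponential asymptotics (see, for example, Section~5.7 of Luke). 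Setting $y=1/(2^N s_N^2 z^2)$ and multiplying by the prefactor $2^{N/2}s_N/\pi^{N/2}$ from (\ref{2.1}), the remaining step is an algebraic consolidation of the powers of $2$, $\pi$, $s_N$ and $|z|$, which reproduces (\ref{2.42}).

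The main obstacle is in part (i): the correct \emph{positive} coefficient $1/(M-1)!$ relies on a careful cancellation between the $(-1)^M$ from $\Gamma(-s)^M$, the extra minus sign from the clockwise contour orientation, and the parity of $(\ln x)^{M-1}$ for $x\in(0,1)$. The bookkeeping for the power of $2$, namely the $2^{M-1}$ coming out of $(-\ln(c z^2))^{M-1}$, must also be tracked so that it combines correctly with the $2^{(N-M)/2}$ in the prefactor. Part (ii) is essentially a substitution once the large-argument asymptotic of $G^{N,0}_{0,N}$ is quoted.
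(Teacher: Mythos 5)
Your proposal is correct and follows essentially the same route as the paper: part (i) extracts the leading $(-\ln x)^{M-1}/(M-1)!$ behaviour from the residue at the order-$M$ pole of the Mellin--Barnes integrand at $s=0$ (your explicit $(-1)^{M+1}$ sign bookkeeping and the $2^{M-1}$ from $(-\ln(cz^2))^{M-1}$ both check out against (\ref{plim1})), and part (ii) inverts the argument via (\ref{mgiv}) (with (\ref{meijergidentity}) only normalising the parameters from $1$ to $0$, a cosmetic difference from the paper) and then quotes Luke's exponential large-argument asymptotic (\ref{asymg}). The only point stated more tersely than it deserves is the claim that the residues at $s=1,2,\ldots$ are of lower order, but the paper's own proof is equally brief there.
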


\begin{theorem}\label{thmtail}
 Suppose $M\geq0$, $N\geq1$. Then, as $|z|\rightarrow\infty$,
	\begin{align}
		f_Z(z)\sim\frac{2^{(M+N-2)/2}\sigma_M}{\pi^{(N+M)/2}(N-1)!s_N}\frac{(\ln|z|)^{N-1}}{z^2}.\label{pdflim}
	\end{align}
 Also, as $z\rightarrow\infty$,
 \begin{align}\label{CDFf1}
		\overline{F}_Z(z)\sim\frac{2^{(M+N-2)/2}\sigma_M}{\pi^{(M+N)/2}(N-1)!s_N}\frac{(\ln(z))^{N-1}}{z}.
	\end{align}
\end{theorem}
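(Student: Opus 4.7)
The plan is first to derive the PDF asymptotic (\ref{pdflim}) by extracting the leading large-argument behaviour of the Meijer $G$-function in (\ref{2.1}), and then to deduce (\ref{CDFf1}) by integrating this tail from $z$ to $\infty$.

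For the PDF, I would use the Mellin--Barnes contour integral representation (\ref{mdef}) of $G^{M,N}_{N,M}(w\,|\,0,\ldots,0;0,\ldots,0)$, whose integrand is (proportional to) $\Gamma(-s)^M\Gamma(1+s)^N w^s$, with a contour separating the poles of $\Gamma(-s)^M$ at $s=0,1,\ldots$ (each of order $M$) from those of $\Gamma(1+s)^N$ at $s=-1,-2,\ldots$ (each of order $N$). Setting $w=(2^Ns_N^2/2^M\sigma_M^2)z^2$, as $|z|\to\infty$ I would shift the contour leftwards and collect residues; the dominant contribution is the residue at $s=-1$, where $\Gamma(-s)^M=\Gamma(1)^M=1$ is regular. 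Writing $s=-1+\epsilon$ and using $\Gamma(1+s)^N\sim\epsilon^{-N}$ together with $w^s=w^{-1}\sum_{k\geq 0}(\epsilon\ln w)^k/k!$, the residue equals $(\ln w)^{N-1}/((N-1)!\,w)$ at leading order in $w$. Substituting $w=(2^Ns_N^2/2^M\sigma_M^2)z^2$ back into (\ref{2.1}), and noting that $\ln w=2\ln|z|+O(1)$ so that $(\ln w)^{N-1}\sim 2^{N-1}(\ln|z|)^{N-1}$, then collecting the resulting powers of $2$ and $\pi$ produces precisely (\ref{pdflim}).

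For the complementary CDF, I would integrate the PDF from $z$ to $\infty$. Letting $C=2^{(M+N-2)/2}\sigma_M/[\pi^{(M+N)/2}(N-1)!\,s_N]$ denote the constant in (\ref{pdflim}), both $\overline{F}_Z(z)=\int_z^\infty f_Z(u)\,du$ and $C(\ln z)^{N-1}/z$ tend to $0$ as $z\to\infty$, so L'H\^{o}pital's rule gives
\begin{equation*}
\lim_{z\to\infty}\frac{\overline{F}_Z(z)}{C(\ln z)^{N-1}/z}=\lim_{z\to\infty}\frac{-f_Z(z)}{-C(\ln z)^{N-1}/z^2+C(N-1)(\ln z)^{N-2}/z^2}=1,
\end{equation*}
where the last equality follows from (\ref{pdflim}) together with $(\ln z)^{N-2}=o((\ln z)^{N-1})$ (with the interpretation $(\ln z)^{-1}\to 0$ when $N=1$).

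The main obstacle will be justifying the contour deformation rigorously: one must show that the integral along the shifted contour is of strictly lower order than the residue at $s=-1$, and that the subleading corrections in the Laurent expansion near $s=-1$ (arising from the Taylor expansion of $\Gamma(-s)^M$ about $s=-1$ and from subleading powers of $\epsilon$ in $\Gamma(1+s)^N$) produce only lower powers of $\ln w$ that are absorbed into the $o$-term. Both points reduce to Stirling-type bounds on $|\Gamma(\sigma+it)|$ along vertical lines in the left half-plane, which ensure rapid decay of the integrand in the imaginary direction.
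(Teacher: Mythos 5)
Your proof is correct and is essentially the paper's argument: the paper also extracts the leading behaviour from the Mellin--Barnes representation by a residue computation (it first applies the inversion identity (\ref{mgiv}) to map the large-argument problem to the small-argument residue at $s=0$ already computed for Theorem \ref{thm2.4}(i), which under $s\mapsto -s$ is exactly your residue at $s=-1$), and then obtains (\ref{CDFf1}) by integrating the tail of (\ref{pdflim}) using $\int_z^\infty (\ln x)^{N-1}x^{-2}\,\mathrm{d}x\sim (\ln z)^{N-1}/z$, which is equivalent to your L'H\^opital step. Both routes leave the contour-deformation/remainder estimates at the same informal level, so there is no substantive difference.
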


\begin{theorem}\label{qprop}
	(i) Suppose $M\geq0$ and $N\geq1$. Then, as $p\rightarrow0$,
	\begin{align}
		Q(p)\sim -\frac{2^{(M+N-2)/2}\sigma_M}{\pi^{(M+N)/2}s_N(N-1)!}\frac{\big(\ln(1/p)\big)^{N-1}}{p}.\label{limf2}
	\end{align}
	(ii) Suppose $M\geq1$ and $N=0$. Then, as $p\rightarrow0$, 
	\begin{align}
		Q(p)\sim-\bigg(\frac{2\sigma_M^{M/2}}{M}\ln\big(1/p\big)\bigg)^{M/2}.\label{limf4}
	\end{align}
\end{theorem}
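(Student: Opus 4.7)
The plan is to obtain each quantile asymptotic by asymptotically inverting the corresponding tail of the distribution, using the symmetry $F_Z(-z)=\overline{F}_Z(z)$. Since $p\to 0^{+}$ forces $Q(p)\to -\infty$, setting $q=-Q(p)$ converts the defining equation into $p=F_Z(Q(p))=\overline{F}_Z(q)$, so in both parts the task reduces to asymptotically inverting $p=\overline{F}_Z(q)$ as $q\to\infty$.

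For part (i), the tail estimate (\ref{CDFf1}) in Theorem \ref{thmtail} is directly applicable and gives
\[ p \sim \frac{C(\ln q)^{N-1}}{q}, \qquad C=\frac{2^{(M+N-2)/2}\sigma_M}{\pi^{(M+N)/2}(N-1)!\,s_N}. \]
Taking logarithms yields $\ln(1/p)=\ln q-(N-1)\ln\ln q-\ln C+o(1)$, from which $\ln q\sim \ln(1/p)$. Feeding this back into $q\sim Cp^{-1}(\ln q)^{N-1}$ and using $Q(p)=-q$ delivers (\ref{limf2}).

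Part (ii) is more involved because Theorem \ref{thmtail} was stated only for $N\geq 1$, so I must first produce a tail asymptotic for $\overline{F}_X$ with $X=X_1\cdots X_M$. I would obtain it either by applying the large-argument asymptotic expansion of the Meijer $G$-function directly to the PDF formula (\ref{1.1}), or, equivalently, by applying the reciprocal transformation to the small-$z$ asymptotic (\ref{2.42}) under the relabelling $N\mapsto M$, $s_N\mapsto\sigma_M$. Either route yields a stretched-exponential tail of the form $f_X(x)\sim C'|x|^{-(M-1)/M}\exp(-a|x|^{2/M})$ with explicit constants $C'$ and $a$ depending on $M$ and $\sigma_M$; a Laplace-type estimate (or integration by parts) then transfers this to $\overline{F}_X(q)$ with the same exponential factor up to an algebraic prefactor. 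Since the exponential dominates every algebraic and logarithmic term, taking logarithms in $p\sim\overline{F}_X(q)$ and retaining only the leading balance gives $aq^{2/M}\sim\ln(1/p)$; solving for $q$ and negating produces (\ref{limf4}).

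The main obstacle is the tail extraction for $\overline{F}_X$ in part (ii): the asymptotic expansion of $G^{M,0}_{0,M}$ at infinity requires careful saddle-point bookkeeping to pin down the correct stretched-exponential rate and prefactor. Once the tail is in hand, the inversion step is routine in both parts, because sub-leading algebraic and iterated-logarithmic corrections in $\overline{F}_Z$ contribute only at lower order on the logarithmic scale and are absorbed into the $\sim$ relation.
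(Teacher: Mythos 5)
Your proposal is correct and follows essentially the same route as the paper: both parts are obtained by asymptotically inverting the relevant tail estimate, namely (\ref{CDFf1}) for part (i) and the stretched-exponential tail (\ref{qlim3}) for part (ii). The only differences are cosmetic: the paper performs the inversion by invoking a quoted lemma (a special case of Lemma 4.2 of \cite{gz24}) rather than by taking logarithms and back-substituting by hand, and for part (ii) it takes the tail (\ref{qlim3}) directly from the literature (\cite{Leipus}, as recorded in the remark preceding the theorem) instead of re-deriving it from the large-argument asymptotics of the Meijer $G$-function --- your proposed derivation via (\ref{asymg}) is nevertheless sound.
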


\begin{remark} 
	When $M\geq1$, $N=0$, we have $Z=X=X_1\cdots X_M$. In this case, the following limiting forms for the PDF and tail probabilities were obtained by \cite{gaunt bern} and \cite{Leipus}, respectively:
	\begin{align}
		f_Z(z)&\sim \frac{2^{M/2-1}}{\sqrt{M\pi}\sigma_M^{1/M}}|z|^{1/M-1}\exp\bigg(-\frac{M}{2}\bigg(\frac{|z|}{\sigma_M}\bigg)^{2/M}\bigg), \quad |z|\rightarrow\infty,\label{2.52} \\
      \overline{F}_Z(z)&\sim\frac{2^{M/2-1}\sigma_M^{1/M}}{\sqrt{M\pi}}z^{-1/M}\exp\bigg(-\frac{M}{2}\bigg(\frac{z}{\sigma_M}\bigg)^{2/M}\bigg), \quad z\rightarrow \infty.\label{qlim3}  
	\end{align}
\end{remark}

\begin{remark}
Part (i) of Theorem \ref{thm2.4} tells us that, as $z\rightarrow0$, the PDF $f_Z(z)$ is bounded for $M=1,N\geq0$, but has a singularity as $z\rightarrow0$ in the cases $M\geq2, N\geq0$ and $M=0, N\geq1$. Since the PDF is bounded everywhere except for the singularity at the origin (for the values of $M$ and $N$ for which there is a singularity), the distribution of $Z$ is unimodal with mode 0 when $M\geq1$, $N\geq0$. On the other hand, in the case $M=0$, $N\geq1$, the limiting form (\ref{2.42}) tells us that, $f_Z(z)\rightarrow0$, as $z\rightarrow0$, meaning that the distribution of $Z$ is not unimodal (since the the PDF $f_Z(z)$ is symmetric about the origin).

It follows from the limiting form (\ref{pdflim}) that the mean of the ratio $Z$ is not defined for $M\geq0$, $N\geq1$. In the case $M=N=1$, we recover the well-known fact that the mean of the Cauchy distribution is undefined. However, the fractional moments $\mathbb{E}[|Z|^r]$ exist for $-1<r<1$, as seen in the following proposition.

\end{remark}

\begin{proposition} Suppose $M\geq0$, $N\geq0$ with $M+N\geq1$. Then, for $-1<r<1$,
\begin{equation}\label{frac}
\mathbb{E}[|Z|^r]=\frac{2^{(M-N)r/2}\sigma_M^r}{\pi^{(M+N)/2}s_N^r}\bigg(\Gamma\Big(\frac{r+1}{2}\Big)\bigg)^M\bigg(\Gamma\Big(\frac{1-r}{2}\Big)\bigg)^N .  
\end{equation}    
When $M\geq1$ and $N=0$, equation (\ref{frac}) holds for all $r>-1$, whilst when $M=0$ and $N\geq1$, equation (\ref{frac}) holds for all $r<1$.
\end{proposition}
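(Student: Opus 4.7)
The plan is to exploit the mutual independence of the $X_i$ and $Y_j$ to reduce $\mathbb{E}[|Z|^r]$ to a product of single-variable absolute moments, each of which is a standard Gaussian integral expressible in closed form via the Gamma function.

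First, I would write $|Z|^r = \prod_{i=1}^{M}|X_i|^r \cdot \prod_{j=1}^{N}|Y_j|^{-r}$ and use the mutual independence of the $X_i$ and $Y_j$ to pass the expectation through the products, obtaining
$$\mathbb{E}[|Z|^r] = \prod_{i=1}^{M}\mathbb{E}[|X_i|^r] \cdot \prod_{j=1}^{N}\mathbb{E}[|Y_j|^{-r}],$$
where the empty-product convention handles the boundary cases $M=0$ or $N=0$ automatically. Next, I would invoke the classical identity
$$\mathbb{E}[|W|^s] = \frac{2^{s/2}\sigma^s}{\sqrt{\pi}}\,\Gamma\!\Big(\tfrac{s+1}{2}\Big), \qquad W \sim N(0,\sigma^2),$$
valid for every $s>-1$ and immediate from the substitution $u = w^2/(2\sigma^2)$ in the defining integral. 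Applying this with $s=r$ to each $X_i$-factor (which requires $r>-1$) and with $s=-r$ to each $Y_j$-factor (which requires $r<1$), then collecting terms, the powers of $\sqrt{2}$ combine to $2^{(M-N)r/2}$, the powers of $\sqrt{\pi}$ combine to $\pi^{(M+N)/2}$, the variances collapse into $\sigma_M^r/s_N^r$ via $\sigma_M=\prod_i\sigma_{X_i}$ and $s_N=\prod_j\sigma_{Y_j}$, and the Gamma factors assemble into $(\Gamma((r+1)/2))^M(\Gamma((1-r)/2))^N$, giving exactly (\ref{frac}).

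The stated ranges of $r$ fall out of this same computation: the joint constraint $-1<r<1$ is precisely what is needed for every single-variable moment factor to be finite; when $N=0$ the $Y$-product is empty so only $r>-1$ is required, and when $M=0$ the $X$-product is empty so only $r<1$ is required. In either degenerate case the missing factor collapses to $1$, consistent with the conventions $\sigma_0=s_0=1$. I expect no genuine obstacle here — the entire proof will be a short one-line factorisation followed by a table-lookup moment formula.
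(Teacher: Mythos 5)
Your proposal is correct and follows exactly the paper's own argument: factorise $\mathbb{E}[|Z|^r]$ by independence into $\prod_i\mathbb{E}[|X_i|^r]\cdot\prod_j\mathbb{E}[|Y_j|^{-r}]$ and apply the standard Gaussian absolute-moment formula $\mathbb{E}[|V|^p]=2^{p/2}\sigma^p\pi^{-1/2}\Gamma((p+1)/2)$ for $p>-1$, with the stated ranges of $r$ emerging from the finiteness conditions on each factor. No gaps; the bookkeeping of the powers of $2$, $\pi$ and the variances is exactly as you describe.
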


\begin{proof} By independence, $\mathbb{E}[|Z|^r]=(\prod_{i=1}^M\mathbb{E}[|X_i|^r])(\prod_{j=1}^N\mathbb{E}[|Y_j|^{-r}])$, and the result now follows from the standard formula $\mathbb{E}[|V|^p]=2^{p/2}\sigma^p \pi^{-1/2}\Gamma((p+1)/2)$ for $p>-1$, where $V\sim N(0,\sigma^2)$.
\end{proof}

\begin{proof}[Proof of Theorem \ref{thm2.4}]
(i) 
To simplify expressions, we let $x=2^{(N-M)/2}s_Nz/\sigma_M$. It suffices to consider the limit $x\downarrow0$, since the PDF (\ref{2.1}) is an even function in $z$. Using the contour integral definition of the Meijer-$G$ function (\ref{mdef}) followed by the residue theorem gives that
	\begin{align}
		G^{M,N}_{N,M}\bigg(x^2 \; \bigg| \;{0,\ldots,0 \atop 0,\ldots,0}\bigg)&=\frac{1}{2\pi\mathrm{i}}\int_L\big(\Gamma(s)\big)^M\big(\Gamma(1-s)\big)^Nx^{-2s}\,\mathrm{d}s\nonumber \\
    &=\sum_{s^*\in A}\mathrm{Res}\big[\big(\Gamma(s)\big)^M\big(\Gamma(1-s)\big)^Nx^{-2s}, \,s=s^*\big]\label{res1},
\end{align}
		where the contour $L$ is a loop encircling the poles of the gamma functions and $A$ is the collection of the poles of the gamma functions. We now compute the residues in (\ref{res1}), starting with the pole at $s=0$, which amounts to finding the coefficient of $s^{-1}$ in the Laurent expansion of $(\Gamma(s))^Mx^{-2s}$.
		A simple calculation using the expansions $\Gamma(s)=1/s+\gamma+o(1)$, as $s\rightarrow0$, where $\gamma$ is the Euler-Mascheroni constant, and $x^{-2s}
        =\sum_{i=0}^{\infty}(-2\ln(x))^is^i/{i!}$ reveals that
		\begin{equation*}
			\mathrm{Res}\big[\big(\Gamma(s)\big)^M\big(\Gamma(1-s)\big)^Nx^{-2s},\,s=0\big]\sim\frac{(-2\ln(x))^{M-1}}{(M-1)!},\quad x\downarrow0.
		\end{equation*}
		For the other poles, the residue is readily seen to be of a smaller asymptotic order in the limit $x\downarrow0$ than the residue at $s=0$. Hence, as $x\rightarrow0$ (equivalently $z\rightarrow0$),
		\begin{align}
			G^{M,N}_{N,M}\bigg(x^2 \; \bigg| \;{0,\ldots,0 \atop 0,\ldots,0}\bigg)&\sim\frac{(-2\ln|x|)^{M-1}}{(M-1)!}\sim\frac{(-2\ln|z|)^{M-1}}{(M-1)!}\label{glim2},
		\end{align}
		from which we deduce the limiting form (\ref{plim1}).
    

\vspace{2mm}
    
	\noindent (ii) We now suppose $M=0$; in this case we set $x=2^{N/2}s_Nz$. Then by identity (\ref{mgiv}) and the limiting form (\ref{asymg}) we obtain that
		\begin{align}
			G^{0,N}_{N,0}\bigg(x^2 \; \bigg| \;{0,\ldots,0 \atop -}\bigg)&=G^{N,0}_{0,N}\bigg(x^{-2} \; \bigg| \;{- \atop 1,\ldots,1}\bigg)\sim\frac{(2\pi)^{(N-1)/2}}{\sqrt{N}}|x|^{-(N+1)/N}\exp\big(-N|x|^{-2/N}\big),\nonumber
			\end{align}
      as $x\rightarrow0$.      The limiting form (\ref{2.42}) now follows on setting $x=2^{N/2}s_Nz$.
\end{proof}

\begin{proof}[Proof of Theorem \ref{thmtail}] (i) By applying identity (\ref{mgiv}) followed by (\ref{meijergidentity}) we have that
\begin{align}
f_Z(z)&=\frac{2^{(N-M)/2}s_N}{\pi^{(M+N)/2}\sigma_M}G^{M,N}_{N,M}\bigg(\frac{2^Ns_N^2}{2^M\sigma_M^2}z^2 \; \bigg| \;{0,\ldots,0 \atop 0,\ldots,0}\bigg)\nonumber\\
&=\frac{2^{(N-M)/2}s_N}{\pi^{(M+N)/2}\sigma_M}G^{N,M}_{M,N}\bigg(\frac{2^M\sigma_M^2}{2^Ns_N^2}\frac{1}{z^2} \; \bigg| \;{1,\ldots,1 \atop 1,\ldots,1}\bigg)\nonumber\\
&=\frac{2^{(N-M)/2}s_N}{\pi^{(M+N)/2}\sigma_M}u^2G^{N,M}_{M,N}\bigg(u^2 \; \bigg| \;{0,\ldots,0 \atop 0,\ldots,0}\bigg),\label{mg1}
\end{align}
where $u=2^{(M-N)/2}\sigma_M/(s_Nz)$.   The limiting form (\ref{pdflim}) now follows from applying the limiting form (\ref{glim2}) to (\ref{mg1}). 

\vspace{2mm}

\noindent (ii) By (\ref{pdflim}), we have that, as $z\rightarrow\infty$,
	\begin{align*}
		\overline{F}_Z(z)=\int_z^\infty f_Z(x)\,\mathrm{d}x\sim\frac{2^{(M+N-2)/2}\sigma_M}{\pi^{(M+N)/2}(N-1)!s_N}\int_{z}^{\infty}\frac{(\ln(x))^{N-1}}{x^2}\,\mathrm{d}x,
	\end{align*}
	and since $\int_{z}^{\infty}(\ln(x))^{N-1}/{x^2}\,\mathrm{d}x\sim(\ln(z))^{N-1}/z$, as $z\rightarrow\infty$, we obtain (\ref{CDFf1}).
\end{proof}

To prove Theorem \ref{qprop} we use the following lemma which is a special case of \cite[Lemma 4.2]{gz24}.
\begin{lemma}
	Let $a, A, z > 0$ and $m\in\mathbb{R}$. Let $g:(0,\infty)\rightarrow\mathbb{R}$ be a function such that $g(x)\rightarrow0$ as $x\rightarrow\infty$. Consider the solution $x$ of the equation
	\begin{equation}
		Ax^{m}\mathrm{e}^{-ax}\big(1+g(x)\big)=z,\label{q1}
	\end{equation}
	and note that there exists a unique solution as long as $z$ is sufficiently small. Then
	\begin{align}
		x&=\frac{1}{a}\ln(1/z)+\frac{m}{a}\ln\big(\ln(1/z)\big)+\frac{\ln(A/a^m)}{a}+o(1), \quad z\rightarrow0.\label{q2}
	\end{align}
\end{lemma}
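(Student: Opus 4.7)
My plan is to solve equation (\ref{q1}) by converting it to additive form via the logarithm, and then bootstrapping: getting the leading order, then substituting back to extract successively finer corrections until an $o(1)$ remainder appears.

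First I would take logarithms of both sides of (\ref{q1}) to obtain
\begin{equation*}
ax = \ln(A/z) + m\ln(x) + \ln\bigl(1+g(x)\bigr),
\end{equation*}
which is valid once $z$ is sufficiently small that both sides are well-defined and the unique root $x=x(z)$ exists. I would then argue that $x(z)\to\infty$ as $z\to 0$: if $x$ remained bounded along some sequence $z_n\to 0$, the left-hand side of (\ref{q1}) would stay bounded while the right-hand side tends to $0$, contradicting equality (since $A,a>0$ force $Ax^m e^{-ax}$ to be bounded below on any bounded interval avoiding $0$; and the case $x\to 0$ is ruled out similarly because $Ax^m e^{-ax}$ does not approach $0$ there unless $m>0$, which is handled by a direct check). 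Consequently $g(x(z))\to 0$, hence $\ln(1+g(x))=o(1)$.

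Next I would establish the leading-order asymptotic. Dividing the displayed equation by $a$ and noting that $m\ln(x)/a=o(x)$ while $\ln(1+g(x))/a = o(1)$, I get $x = a^{-1}\ln(1/z) + o(x)$, so $x \sim a^{-1}\ln(1/z)$ as $z\to 0$. This in turn yields $\ln x = \ln\ln(1/z) - \ln a + o(1)$, because $\ln x = \ln(a^{-1}\ln(1/z)) + \ln(1+o(1))$. Substituting this refined expression for $\ln x$ back into the displayed equation gives
\begin{equation*}
ax = \ln(A/z) + m\bigl[\ln\ln(1/z) - \ln a + o(1)\bigr] + o(1),
\end{equation*}
and dividing by $a$ and grouping constants produces exactly (\ref{q2}).

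The only genuinely delicate point is the bookkeeping of error terms: I have to check that the $o(1)$ correction in $\ln x$ really is $o(1)$ after multiplication by $m/a$ (which is immediate since $m$ is a fixed real constant), and that $\ln(1+g(x))/a = o(1)$ (which follows from $g(x)\to 0$). Everything else is algebra. There is no circularity in the bootstrap because each iteration strictly improves the known order of accuracy of $x$, so after two substitutions the residual is $o(1)$ as required. I do not anticipate any other obstacles; the argument is standard for asymptotic inversion of exponential-times-power equations.
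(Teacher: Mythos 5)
Your proof is correct. Note that the paper does not actually prove this lemma at all: it is stated as ``a special case of Lemma 4.2 of \cite{gz24}'' and the proof is delegated entirely to that reference. Your bootstrap argument therefore supplies a self-contained proof where the paper offers only a citation, and it is the standard (and essentially the only natural) route: take logarithms to get $ax=\ln(A/z)+m\ln(x)+\ln(1+g(x))$, show $x\rightarrow\infty$ so that the $g$-term is $o(1)$, extract the leading order $x\sim a^{-1}\ln(1/z)$ from $\ln(x)=o(x)$, and substitute $\ln(x)=\ln(\ln(1/z))-\ln(a)+o(1)$ back in to land on (\ref{q2}). The error bookkeeping you describe is sound. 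One small point worth tightening: for $m>0$ the function $Ax^{m}\mathrm{e}^{-ax}$ vanishes at both $x=0$ and $x=\infty$, so for small $z$ equation (\ref{q1}) generically has \emph{two} solutions and the claimed uniqueness (and your dismissal of the $x\rightarrow0$ branch as ``a direct check'') should really be phrased as selecting the unique \emph{large} solution, i.e.\ the branch on which the left-hand side is eventually decreasing; this is how the lemma is used in the paper (the quantile $Q(p)$ diverges as $p\rightarrow0$), so it is a matter of statement hygiene rather than a gap in your argument.
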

\noindent{\emph{Proof of Theorem \ref{qprop}.}} (i) 
Since the quantile function $Q(p)$ solves the equation $F_Z(Q(p))=p$ (and recalling that $F_Z(-z)=\overline{F}_Z(z)$), by applying the limiting form (\ref{CDFf1}) it can be seen that $Q(p)$ solves an equation of the form (\ref{q1}) with $x=\ln(-Q(p))$, and $z=p$, $a=1$, $m=N-1$ and $A=2^{(M+N-2)/2}\sigma_M/(\pi^{(M+N)/2}(N-1)!s_N)$. By applying (\ref{q2}) with $a=1$, $z=p$, $m=N-1$ and $x=\ln(-Q(p))$, we get that, as $p\rightarrow0$,
\begin{align}
	Q(p)&=-\exp\Big(\ln(1/p)+\ln\big((\ln(1/p))^{N-1}\big)+\ln(A)+o(1)\Big)\sim-\frac{A}{p}\big(\ln(1/p)\big)^{N-1}.\label{y1}
\end{align}
The limiting form (\ref{limf2}) now follows from inputting the above value of $A$ into (\ref{y1}).

\vspace{2mm}

\noindent (ii) Upon applying the limiting form (\ref{qlim3}) to the equation $F_Z(Q(p))=p$ (and again recalling that $F_Z(-z)=\overline{F}_Z(z)$), it can be seen that $Q(p)$ solves an equation of the form (\ref{q1}) with $x=(-Q(p))^{2/M}$, and $A=(2^{M/2-1}\sigma_M^{1/M})/\sqrt{M\pi}$, $z=p$, $a=M/(2\sigma_M^{2/M})$ and $m=-1/2$. By applying (\ref{q2}) with $m=-1/2$, $z=p$ and $x=(-Q(p))^{2/M}$, we have, as $p\rightarrow0$, 
\begin{align}
	Q(p)&=-\bigg(\frac{1}{a}\ln(1/p)-\frac{1}{2a}\ln\big(\ln(1/p)\big)+\frac{\ln(Aa^{1/2})}{a}+o(1)\bigg)^{M/2} \sim-\bigg(\frac{1}{a}\ln(1/p)\bigg)^{M/2}, \label{y2}
\end{align}
and the limiting form (\ref{limf4}) follows by inputting $a=M/(2\sigma_M^{2/M})$  into (\ref{y2}).
 \qed

\section{Special cases}\label{sec3}
In this section, we present some special cases in which the general formulas for the PDF, CDF and characteristic function from Section \ref{sec2} take a simpler form. 


\vspace{2mm}

\noindent 1. $N=0$: As mentioned in the introduction, in the case $M=2$, the PDF can be expressed in terms of the modified Bessel function of the second kind. In the $M=2$ case, a closed-form formula in terms of the modified Bessel function of the second kind and the modified Struve function of the first kind is also available; see \cite[Section 2.3]{gaunt22}. For $M=2,3,4$, simpler formulas, expressed in terms of elementary functions and Bessel functions, are available for the characteristic function; see \cite[Remark 3.8]{gl24}. 

\vspace{2mm}

\noindent 2. $M=0$, $N=1$: the random variable $Z=1/Y$ follows the reciprocal normal distribution with PDF $f_Z(z)=(\sqrt{2\pi}s_1z^2)^{-1}\exp(-1/(2s_1^2z^2))$, $z\in\mathbb{R}$. We can apply the reduction formula (\ref{M0N1}) to the general formula (\ref{ratioo}) for the characteristic function to obtain the following formula, which we could not find in the literature, for the characteristic function of $Z=1/Y$ in terms of the ${}_0F_2
$ generalized hypergeometric function (see \cite[Chapter 16]{olver}):
\begin{align*}
\varphi_Z(t)= {}_0F_2\bigg(;\frac{1}{2},\frac{1}{2};\frac{t^2}{8s_1^2}\bigg)-\frac{\sqrt{\pi}|t|}{\sqrt{2}s_1}\,{}_0F_2\bigg(;1,\frac{3}{2};\frac{t^2}{8s_1^2}\bigg), \quad t\in\mathbb{R}. 
\end{align*}



\noindent 3. $M=0$, $N=2$: By the standard formula for the PDF of a one-to-one transformation of a random variable, we have $f_Z(z)=f_{1/Y}(z)=z^{-2}f_Y(1/z)$, $z\in\mathbb{R}$, and therefore 
\begin{align*}
f_Z(z)=\frac{1}{\pi s_2z^2}K_0\bigg(\frac{1}{s_2|z|}\bigg),\nonumber\quad z\in\mathbb{R}.   \end{align*}
Also, since the distribution of $Z$ is symmetric about the origin
we have, for $z\in\mathbb{R}$,
\begin{align*}
F_Z(z)&=  \frac{1}{2}  +\frac{\mathrm{sgn}(z)}{\pi s_2}\int_0^{|z|} \frac{1}{x^2}K_0\bigg(\frac{1}{s_2x}\bigg)\,\mathrm{d}x= \frac{1}{2}  +\frac{\mathrm{sgn}(z)}{\pi}\int_0^{1/(s_2|z|)} K_0(u)\,\mathrm{d}u\\
&=\frac{1}{2}+\frac{1}{2s_2z}\bigg[K_0\bigg(\frac{1}{s_2|z|}\bigg)\mathbf{L}_{-1}\bigg(\frac{1}{s_2|z|}\bigg)+\mathbf{L}_0\bigg(\frac{1}{s_2|z|}\bigg)K_1\bigg(\frac{1}{s_2|z|}\bigg)\bigg],
\end{align*}
where $\mathbf{L}_\nu(x)$ is the modified Struve function of the first kind \cite[Chapter 11]{olver}. Here we evaluated the integral using \cite[equation 6.561(4)]{gradshetyn} together with the basic identity $K_{-1}(x)=K_1(x)$ (see \cite[equation 10.27.3]{olver}).

\vspace{2mm}

\noindent 4. $M=N$: This case corresponds to the product of $M$ independent Cauchy random variables.
Applying the reduction formulas in (\ref{MG3}) to the PDF formula (\ref{2.1}) yields the following elementary formulas for the PDFs of $Z_2={X_1X_2}/({Y_1Y_2})$ and $Z_3={X_1X_2X_3}/({Y_1Y_2Y_3})$:
\begin{align}
	\label{vgy} f_{Z_2}(z)&=\frac{2r_2\ln(r_2|z|)}{\pi^2(r_2^2z^2-1)},\quad z\in\mathbb{R},\\
   \label{vgy2} f_{Z_3}(z)&=\frac{r_3(4\ln^2(r_3|z|)+\pi^2)}{2\pi^3(r_3^2z^2+1)},\quad z\in\mathbb{R},
\end{align}
where $r_2=s_2/\sigma_2$ and $r_3=s_3/\sigma_3$. Formula (\ref{vgy}) was earlier derived by \cite{r65} (in the case $\sigma_2=s_2=1$). Formula (\ref{vgy2}) was obtained by \cite{st66} (in the case $\sigma_3=s_3=1$), who more generally gave an expression for the PDF of the product of $M$ independent Cauchy random variables as a finite series expressed in terms of powers of $\ln(|z|)$, and gave closed-form formulas for $M=1,2,\ldots,10$. 

A closed-form formula, expressed in terms of the dilogarithm function, for the CDF of $Z_2$ can also be given (such a formula was not given in \cite{r65} or \cite{st66}). The dilogarithm is a special case of the polylogarithm, which can be defined by the power series $\mathrm{Li}_s(x)=\sum_{k=0}^\infty x^k/k^s$ for $|x|<1$ and by analytic continuation elsewhere; the dilogarithm $\mathrm{Li}_2$ corresponds to the case $s=2$. Since the distribution of $Z$ is symmetric about the origin, we have that $F_{Z_2}(z)=1/2+\mathrm{sgn}(z)\int_0^{|z|} f_{Z_2}(x)\,\mathrm{d}x$. Thus, for $z\in\mathbb{R}$,
\begin{align}
F_{Z_2}(z)&=\frac{1}{2}+\frac{2\,\mathrm{sgn}(z)}{\pi^2}\int_0^{r_2|z|}\frac{\ln(x)}{x^2-1}\,\mathrm{d}x=\frac{1}{2}+\frac{\mathrm{sgn}(z)}{\pi^2}\int_0^{r_2|z|}\bigg(\frac{\ln(x)}{x-1}-\frac{\ln(x)}{x+1}\bigg)\,\mathrm{d}x\nonumber\\
&=\frac{1}{2}+\frac{\mathrm{sgn}(z)}{\pi^2}\int_0^{r_2|z|}\bigg(\frac{\ln(x)}{x-1}+\frac{\ln(x+1)}{x}-\bigg(\frac{\ln(x)}{x+1}+\frac{\ln(x+1)}{x}\bigg)\bigg)\,\mathrm{d}x\nonumber \\
&=\frac{1}{2}+\frac{\mathrm{sgn}(z)}{\pi^2}\bigg[\frac{\pi^2}{6}-\mathrm{Li}_2(1-r_2|z|)-\mathrm{Li}_2(-r_2|z|)-\ln(r_2|z|)\ln(1+r_2|z|)\bigg],   \label{redd}
\end{align}
where we evaluated the integral using the integral formula $\int_1^x\ln(t)/(1-t) \,\mathrm{d}t=\mathrm{Li}_2(1-x)$ (see \cite[equation 25.12.2]{olver}), the basic derivative $\frac{\mathrm{d}}{\mathrm{d}x}(\ln(x)\ln(x+1))=\log(x)/(x+1)+\log(x+1)/x$, and the specific values $\mathrm{Li}_2(1)=\pi^2/6$ and $\mathrm{Li}_2(0)=0$.
A closed-form formula, expressed in terms of the polylogarithm function, can also be given for the CDF of $Z_3$; however, the formula is rather complicated, so we do not report it.

Finally, we note that as a by-product of this analysis we obtain the following reduction formula for the Meijer $G$-function, which to the best of our knowledge is new.

\begin{corollary}Suppose $a\in\mathbb{R}$. Then, for $x>0$,
\begin{align*}
G_{3,3}^{2,3}\bigg(x \; \bigg| \;{a+\frac{1}{2},a,a\atop a,a,a-\frac{1}{2}}\bigg)= 2x^{a-1/2}\bigg[\frac{\pi^2}{6}-\mathrm{Li}_2(1-\sqrt{x})-\mathrm{Li}_2(-\sqrt{x})-\ln(\sqrt{x})\ln(1+\sqrt{x})\bigg].  
\end{align*}
\end{corollary}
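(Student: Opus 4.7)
The plan is to read off this reduction formula as a by-product of the two derivations of the CDF of $Z_2=X_1X_2/(Y_1Y_2)$ given immediately before its statement, and then to propagate the free parameter $a$ through a standard shift of the Meijer $G$-function.

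First I would specialise Theorem \ref{thmcdf} (specifically formula (\ref{2.2b})) to the case $M=N=2$. This yields, for $z>0$,
\begin{align*}
F_{Z_2}(z)=\frac{1}{2}+\frac{1}{2\pi^{2}}G^{2,3}_{3,3}\!\left(\frac{s_2^2}{\sigma_2^2}z^2\;\Big|\;{1,\tfrac{1}{2},\tfrac{1}{2}\atop \tfrac12,\tfrac12,0}\right).
\end{align*}
Next I would invoke the elementary closed-form expression (\ref{redd}) for the same CDF, and equate the two right-hand sides. After writing $y=(s_2 z/\sigma_2)^2=r_2^2 z^2$, so that $\sqrt{y}=r_2 z$, the $1/2$'s cancel and, after multiplying through by $2\pi^2/\pi^2=2$, one obtains
\begin{align*}
G^{2,3}_{3,3}\!\left(y\;\Big|\;{1,\tfrac{1}{2},\tfrac{1}{2}\atop \tfrac12,\tfrac12,0}\right)=2\bigg[\frac{\pi^2}{6}-\mathrm{Li}_2(1-\sqrt{y})-\mathrm{Li}_2(-\sqrt{y})-\ln(\sqrt{y})\ln(1+\sqrt{y})\bigg],
\end{align*}
valid for every $y>0$ (the condition $y>0$ coming from $z>0$ and the positivity of $r_2$). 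This is precisely the $a=\tfrac12$ case of the claimed identity, since then $a+\tfrac12=1$ and $a-\tfrac12=0$.

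Finally, to introduce the free parameter $a\in\mathbb{R}$, I would apply the elementary shift identity for the Meijer $G$-function recorded in (\ref{meijergidentity}), namely
\begin{align*}
x^{c}\,G^{m,n}_{p,q}\!\left(x\;\Big|\;{a_1,\ldots,a_p\atop b_1,\ldots,b_q}\right)=G^{m,n}_{p,q}\!\left(x\;\Big|\;{a_1+c,\ldots,a_p+c\atop b_1+c,\ldots,b_q+c}\right),
\end{align*}
with $c=a-\tfrac12$. Multiplying the identity of the previous paragraph by $x^{a-1/2}$ and absorbing the factor into the $G$-function shifts the upper parameters $(1,\tfrac12,\tfrac12)$ to $(a+\tfrac12,a,a)$ and the lower parameters $(\tfrac12,\tfrac12,0)$ to $(a,a,a-\tfrac12)$, producing exactly the left-hand side of the corollary.

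There is no serious obstacle here, since the whole argument is bookkeeping once (\ref{redd}) and (\ref{2.2b}) are in hand; the only mildly delicate point is checking that $y>0$ gives the full range $x>0$ claimed in the corollary (which it does), and that the shift $c=a-1/2$ sends the explicit parameter strings onto the strings in the statement (which it does termwise).
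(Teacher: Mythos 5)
Your proposal is correct and is essentially identical to the paper's proof, which simply combines (\ref{2.2b}) and (\ref{redd}) in the case $M=N=2$ and then applies the shift identity (\ref{meijergidentity}); you have merely written out the bookkeeping explicitly. The specialisation, the identification $y=r_2^2z^2$, and the shift by $c=a-\tfrac12$ all check out.
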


\begin{proof}
Combine equations (\ref{2.2b}) and (\ref{redd}) and then apply the identity (\ref{meijergidentity}).
\end{proof}

\noindent 5. $M=2$, $N=1$: Combining formulas (\ref{2.1}) and (\ref{MG5}), and formulas (\ref{2.3b}) and (\ref{nnnn}) yields the following formulas for the PDF and characteristic function of $Z=X_1X_2/Y_1$:
\begin{align}\label{nonn}
	f_Z(z)&=\frac{s_1}{\sqrt{2}\pi^{3/2}\sigma_2}\exp\bigg(\frac{s_1^2z^2}{2\sigma_2^2}\bigg)E_1\bigg(\frac{s_1^2z^2}{2\sigma_2^2}\bigg), \quad z\in\mathbb{R}, \\
    \varphi_Z(t)&=\frac{1}{\sqrt{\pi}}\exp\bigg(\frac{\sigma_2^2t^2}{2s_1^2}\bigg)\Gamma\bigg(\frac{1}{2},\frac{\sigma_2^2t^2}{2s_1^2}\bigg),\quad t\in\mathbb{R}, \nonumber
\end{align}
where $E_1(x)=\int_{0}^{\infty}\mathrm{e}^{-t}/t\,\mathrm{d}t$ is the exponential integral function and $\Gamma(a,x)=\int_x^\infty u^{a-1}\mathrm{e}^{-u}\,\mathrm{d}u$ is the upper incomplete gamma function.

\vspace{2mm}

\noindent 6. $M=1$, $N=2$: From formula (\ref{nonn}) we readily obtain the following formula for the PDF of $Z={X_1}/({Y_1Y_2})(=(Y_1Y_2/X_1)^{-1})$:
\begin{equation}
	f_Z(z)=\frac{\sigma_1}{\sqrt{2}\pi^{3/2}s_2z^2}\exp\bigg(\frac{\sigma_1^2}{2s_2^2z^2}\bigg)E_1\bigg(\frac{\sigma_1^2}{2s_2^2z^2}\bigg),\nonumber\quad z\in\mathbb{R}.
\end{equation}

\vspace{2mm}

\noindent 7. $M=3$, $N=1$: In this case, we were unable to locate in the literature a reduction formula for the Meijer $G$-function that allows for a simpler formula to be given for the PDF. However, on applying the reduction formula (\ref{mmmm}) to (\ref{2.3b}) we obtain the following formula for the characteristic function of $Z=X_1X_2X_3/Y_1$:
\begin{align*}
\varphi_Z(t)= \frac{2\cos^{-1}(\sigma_3|t|/s_1)}{\pi\sqrt{1-(\sigma_3|t|/s_1)^2}}, \quad t\in\mathbb{R}.   
\end{align*}





\appendix
\section{The Meijer $G$-function}\label{appa}
In this appendix, we define the Meijer $G$-function, and state some of its relevant basic properties, which, unless otherwise stated, can be found in the standard references \cite{luke,olver}.

The \emph{Meijer $G$-function} is defined, for $x\in\mathbb{R}$, by the contour integral
\begin{equation}\label{mdef}G^{m,n}_{p,q}\bigg(x \, \bigg|\, {a_1,\ldots, a_p \atop b_1,\ldots,b_q} \bigg)=\frac{1}{2\pi \mathrm{i}}\int_L\frac{\prod_{j=1}^m\Gamma(b_j-s)\prod_{j=1}^n\Gamma(1-a_j+s)}{\prod_{j=n+1}^p\Gamma(a_j-s)\prod_{j=m+1}^q\Gamma(1-b_j+s)}x^s\,\mathrm{d}s,
\end{equation}
where the integration path $L$ separates the poles of the factors $\Gamma(b_j-s)$ from those of the factors $\Gamma(1-a_j+s)$.  We use the convention that the empty product is $1$.

The $G$-function satisfies the identities
\begin{align}\label{lukeformula}G_{p,q}^{m,n}\bigg(x \; \bigg| \;{a_1,\ldots,a_{p-1},b_1 \atop b_1,\ldots,b_q}\bigg)&=G_{p-1,q-1}^{m-1,n}\bigg(x \; \bigg| \;{a_1,\ldots,a_{p-1} \atop b_2,\ldots,b_q}\bigg), \quad n>p;\, m,p,q\geq 1, \\
\label{meijergidentity}x^\alpha G_{p,q}^{m,n}\bigg(x \; \bigg| \;{a_1,\ldots,a_p \atop b_1,\ldots,b_q}\bigg)&=G_{p,q}^{m,n}\bigg(x \, \bigg| \,{a_1+\alpha,\ldots,a_p+\alpha \atop b_1+\alpha,\ldots,b_q+\alpha}\bigg),\\
	G_{p,q}^{m,n}\bigg(x \; \bigg| \;{a_1,\ldots,a_p \atop b_1,\ldots,b_q}\bigg)&=G_{q,p}^{n,m}\bigg(x^{-1} \; \bigg| \;{1-b_1,\ldots,1-b_q \atop1-a_1,\ldots,1-a_p}\bigg).\label{mgiv}
\end{align}
Combining equations 5.4(1) and 5.4(13) of \cite{luke}, yields the following indefinite integral formula:
\begin{equation}\label{mint}\int x^{\alpha-1}G_{p,q}^{m,n}\bigg(x \; \bigg| \;{a_1,\ldots,a_p \atop b_1,\ldots,b_q}\bigg)\,\mathrm{d}x=x^\alpha G_{p+1,q+1}^{m,n+1}\bigg(x \, \bigg| \,{1-\alpha,a_1,\ldots,a_p \atop b_1,\ldots,b_q,-\alpha}\bigg).
\end{equation}
The following limiting form is given in \cite[Section 5.7, Theorem 5]{luke}: as $x\rightarrow\infty$,
\begin{equation}\label{asymg}G^{q,0}_{p,q}\bigg(x \; \bigg|\; {a_1,\ldots, a_p \atop b_1,\ldots,b_q} \bigg)\sim \frac{(2\pi)^{(\sigma-1)/2}}{\sqrt{\sigma}}x^\theta \exp\big(-\sigma x^{1/\sigma}\big), 
\end{equation}
where $\sigma=q-p$ and $\theta=\sigma^{-1}\{(1-\sigma)/2+\sum_{i=1}^qb_i-\sum_{i=1}^pa_i\}$.

We have the following reduction formulas:
\begin{align}
	G_{2,2}^{2,2}\bigg(x \; \bigg| \;{0,0\atop 0,0}\bigg)&=\frac{\ln(x)}{x-1},	\quad
	G_{3,3}^{3,3}\bigg(x \; \bigg| \;{0,0,0\atop 0,0,0}\bigg)=\frac{\ln^2(x)+\pi^2}{2(x+1)},\label{MG3}\\
	G_{1,2}^{2,1}\bigg(x \; \bigg| \;{0\atop 0,0}\bigg)&=\mathrm{e}^xE_1(x),\label{MG5}\\
    G_{1,2}^{2,1}\bigg(x \; \bigg| \;{\frac{1}{2}\atop 0,\frac{1}{2}}\bigg)&=\sqrt{\pi}\mathrm{e}^x\Gamma\bigg(\frac{1}{2},x\bigg),\label{nnnn}\\
    G_{2,2}^{2,2}\bigg(x \; \bigg| \;{\frac{1}{2},\frac{1}{2}\atop 0,\frac{1}{2}}\bigg)&=\sqrt{\frac{\pi}{x}}\Phi\bigg(1-\frac{1}{x},1,\frac{1}{2}\bigg) 
    =\frac{2\sqrt{\pi}\cos^{-1}(\sqrt{x})}{\sqrt{1-x}},\label{mmmm}\\
    G_{0,3}^{2,0}\bigg(x \; \bigg| \;{-\atop 0,\frac{1}{2},\frac{1}{2}}\bigg)&={}_0F_2\bigg(;\frac{1}{2},\frac{1}{2};x\bigg)-2\sqrt{\pi x}\,{}_0F_2\bigg(;1,\frac{3}{2};x\bigg),\label{M0N1}
\end{align}
where these reduction formulas are suitable special cases of formulas that can be found at http://functions.wolfram.com/07.34.03.0887.01, http://functions.wolfram.com/07.34.03.1053.01, http://functions.wolfram.com/07.34.03.0724.01, http://functions.wolfram.com/07.34.03.0723.01, http://functions.wolfram.com/07.34.03.0880.01, http://functions.wolfram.com/07.34.03.0607.01, respectively. Here $\Phi(x,s,a)$ is the Lerch transcendent, which is defined by the power series $\Phi(x,s,a)=\sum_{k=0}^\infty x^k/(a+k)^s$ for $|x|<1$ and by analytic continuation elsewhere. We obtained the second equality in (\ref{mmmm}) by using the identities $\Phi(x,1,1/2)=2x^{-1/2}\tanh^{-1}(\sqrt{x})$ and $\tanh^{-1}(\sqrt{1-1/x})=-\mathrm{i}\cos^{-1}(\sqrt{x})$.

\section*{Acknowledgements}
RG was funded in part by EPSRC grant EP/Y008650/1 and EPSRC grant UKRI068. HS was funded by EPSRC grant EP/Y008650/1.



\footnotesize

\end{document}